\newcommand*{\cs}[1]{\texttt{\textbackslash#1}}
\newcommand*{\cmd}[1]{\cs{\expandafter\@gobble\string#1}}
\newtheorem{definition}{Definition}
\newtheorem{theorem}{Theorem}
\newtheorem{remark}{Remark}
\newtheorem{lemma}{Lemma}
\newtheorem{corollary}{Corollary}
\newtheorem{property}{Property}
\begin{document}


\title{On the  transformations linearizing  isochronous centers of Hamiltonian  systems }

\author{Guangfeng Dong \\{\small Department of Mathematics, Jinan University,}\\{\small Guangzhou 510632, China,
donggf@jnu.edu.cn(Corresponding author)}\\ \\Yuyi Zhang \\{\small Department of Mathematics, Jinan University,}\\{\small Guangzhou 510632, China,
951537882@qq.com }}

\date{\today}

\maketitle
\begin{abstract}
In this paper we study the transformations linearizing  isochronous centers
of planar Hamiltonian differential systems with polynomial Hamiltonian functions $H(x,y)$
having only isolated singularities.
Assuming the origin is an isochronous center lying on the level curve $L_0$ defined by $H(x,y)=0$,
we prove that,
there exists a canonical linearizing transformation
analytic  on a simply-connected open set $\Omega$  with closure $\overline{\Omega}=\mathbb{R}^2$,
if and only if,
 $L_0$ consists of only isolated points;
furthermore, if the origin is the unique center,
then the condition that $L_0$ consists of only isolated points
implies that the corresponding canonical linearizing transformation
can be analytically defined  on the whole plane.
\\
\\
\noindent \textbf{Keywords:} isochronous center; Hamiltonian systems; canonical linearizing transformation;  commuting system.

\noindent \textbf{MSC(2010):} 34C20;  37C10.
\end{abstract}

\section{Introduction and  main results}
\label{intro}

Isochronous center is one kind of the most interesting  singularities of integrable differential systems
and  has been studied extensively for decades
(see, e.g., \cite{C-S,F-R-S-T,L-V} and references therein),
especially for polynomial Hamiltonian differential systems as follows:
 \begin{align}\label{H-R}
\left(
  \begin{array}{c}
    \frac{{\rm d} x}{{\rm d} t} \\
    \frac{{\rm d} y}{{\rm d} t} \\
  \end{array}
\right)
=\left(
      \begin{array}{r}
        -H_y\\
        H_x \\
      \end{array}
    \right), \ H_x=\frac{\partial H(x,y)}{\partial x},\ H_y=\frac{\partial H(x,y)}{\partial y},
\end{align}
where the Hamiltonian function
$H(x,y)=(x^2 +y^2 )/2+h.o.t.$ is a polynomial of degree $n$ in $\mathbb{R}[x,y]$ such that all of its singularities are isolated.
Clearly the origin $O$ is a center and
 its period function  is defined through the period of each periodic orbit inside the period annulus.
If the period function is constant, then the center is  conventionally
called isochronous.

The isochronicity of Hamiltonian systems, as a special case within
 the more general category, is a very subtle problem and has been characterized completely only for very few families.
It is proved in \cite{C-J} that in the potential case the unique polynomial
isochronous center is the linear one.
When  the Hamiltonian function takes the form $H(x, y) =F(x) +G(y)$, it is
proved in  \cite{C-G-M} that
the unique isochronous center  turns out to be the linear one, too.
For polynomial Hamiltonian systems,
due to \cite{Loud},  the quadratic isochronous centers have been classified completely.
In \cite{C-M-V,L-R}, isochronous centers  for $n=4$ are
investigated with a   corresponding classification.
In addition, it has been shown, see, for example,   \cite{C-D,G-G-M-M},
  that Hamiltonian systems have no isochronous centers if they have homogeneous nonlinearities.
Some other related results can be found in, e.g., \cite{F-R-S-T,L-R} and the references therein.

Notably the isochronicity  is closely related with
the linearizibility of a center.
For system (\ref{H-R}), it is well known that(see \cite{M-V})
the origin is  an isochronous center of period $2\pi$ if and only if
there there exists a canonical  transformation
\begin{eqnarray}\label{phi}
\Phi:\ (x,y)\mapsto (u(x,y),v(x,y))
\end{eqnarray}
 analytic in a neighbourhood of the origin
transforming system (\ref{H-R}) to a linear system
\begin{eqnarray}\label{H-R-L}
\left(
  \begin{array}{c}
    \frac{{\rm d} u}{{\rm d} t} \\
    \frac{{\rm d} v}{{\rm d} t} \\
  \end{array}
\right)
=\left(
      \begin{array}{r}
        -v\\
        u\\
      \end{array}
    \right),
\end{eqnarray}
where  $u(x, y)=x+h.o.t.$, $v(x, y)=y+h.o.t.$.
Here a transformation $\Phi$ is called \emph{canonical}
if its Jacobian determinant is constant.
Given the above forms of $u$ and $v$, this constant is equal to $1$, i.e.
\begin{eqnarray*}
\det\left(\frac{\partial (u,v)}{\partial (x,y)} \right)=\det \left(
  \begin{array}{cc}
    u_x & u_y\\
    v_x & v_y\\
  \end{array}
\right)=1.
 \end{eqnarray*}
Note that a canonical linearizing transformation $\Phi$ implies the following equation holds
\begin{eqnarray}\label{uv-H}
\frac{u^2(x,y) +v^2(x,y) }{2}=H(x,y),
\end{eqnarray}
which means that $\Phi$ maps the level curve
\begin{eqnarray}\label{L_h}
L_h=\{(x,y):H(x,y)=h\}
\end{eqnarray}
onto the circle
$$S_h=\{(u,v):u^2 +v^2=2h\}.$$
There may also exist non-canonical transformations changing a Hamiltonian system to another Hamiltonian system, see, e.g., \cite{Saba1}.

In general the analytic transformation $\Phi$ is well defined locally.
The question whether or not it can be defined globally is very important
in the study of the global topological structure of an isochronous center.
For example,
the global existence of a canonical linearizing transformation implies that
there are no isochronous centers for system (\ref{H-R}) with odd $n$(see, e.g. Theorem 1 of \cite{L-R}).
This fact provides strong support to the negative answer to the following open question(see \cite{J-V}):
\emph{Does there exist a planar polynomial Hamiltonian system (\ref{H-R}) with odd $n$ having an isochronous center?}
From the proof of the following theorem,
 we can also obtain the same conclusion to Theorem $1$ of \cite{L-R}(see Corollary \ref{Odd} below).

In this paper,
we study the global existence of a canonical linearizing transformation for system (\ref{H-R}),
and obtain the following two theorems to characterise some properties of the domains where the transformation can be defined well.
\begin{theorem}\label{TM-c-g}
For  system (\ref{H-R}),
if the origin $O$  is  an isochronous center,
then there exists a canonical linearizing transformation $\Phi$
analytic on a simply-connected open set $\Omega \ni O$ with the closure  $\overline{\Omega}  =\mathbb{R}^2$,
if and only if,
 $L_0 $ consists of only isolated points.
\end{theorem}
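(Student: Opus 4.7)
The plan is to treat both implications through the identity $(u^2+v^2)/2 = H(x,y)$ from equation (5) and the resulting constraint $H \geq 0$.

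For the necessity direction, I would first observe that equation (5) holds on a neighborhood of $O$. Because $u$ and $v$ are real-analytic on the connected set $\Omega$ (simply-connected implies connected) and agree with the polynomial $2H$ on an open subset, analytic continuation gives the identity throughout $\Omega$. Hence $H \geq 0$ on $\Omega$, and by the density $\overline{\Omega} = \mathbb{R}^2$ together with continuity of $H$, we get $H \geq 0$ on the whole plane. Then every point of $L_0 = H^{-1}(0)$ is a global minimum of $H$, so $\nabla H = 0$ there. Since the singularities of $H$ are isolated by standing assumption, $L_0$ is automatically a finite set of isolated points.

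For sufficiency, the first step is to argue $H \geq 0$ globally from the hypothesis that $L_0$ is isolated. Near $O$, $H = (x^2+y^2)/2 + \text{h.o.t.} \geq 0$; any sign change of $H$ across $\mathbb{R}^2$ would force a $1$-dimensional component into $L_0$, contradicting isolatedness. Therefore $H \geq 0$ everywhere, and every singularity of $H$ is a local minimum and hence a center of system (1); in particular there are no saddles and no separatrix connections. I would then construct $\Phi$ globally by exploiting isochronicity: on the period annulus of $O$ define the angle $\theta(x,y)$ as the flow-time modulo $2\pi$ from a fixed analytic transversal to the orbit through $(x,y)$, and set $u + iv = \sqrt{2H}\, e^{i\theta}$. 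This matches the local linearization near $O$ after a suitable rotation of $\theta$; canonicity ($\det \partial(u,v)/\partial(x,y) = 1$) follows from the two identities $u^2+v^2 = 2H$ and $\{H,\theta\} = 1$, the latter being the infinitesimal reformulation of isochronicity. The candidate $\Omega$ is then obtained by removing from $\mathbb{R}^2$ the finite set $L_0 \setminus \{O\}$ and a finite collection of analytic arcs (orbit segments or rays) chosen to kill every generator of $\pi_1(\mathbb{R}^2 \setminus L_0)$; since the removed set has empty interior, $\overline{\Omega} = \mathbb{R}^2$, and having cut every nontrivial loop, $\Omega$ is simply connected.

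The main obstacle, as I see it, is the global analytic and topological coherence of $\Phi$ in the sufficiency step. Analytic dependence of flows yields analyticity of $\theta$ locally off $L_0$, and the Poisson relation gives canonicity, so the real content is to engineer the cut-locus so that it is simultaneously thin enough that $\overline{\Omega} = \mathbb{R}^2$, wide enough that $\Omega$ is simply connected, and compatible with the flow so that the determination of $\theta$ glues coherently when one crosses from the period annulus of $O$ into regions containing other centers or unbounded orbits. The absence of saddles — a consequence of $H \geq 0$ derived above — is what will rule out the otherwise-problematic heteroclinic and homoclinic obstructions and make this cut-construction feasible.
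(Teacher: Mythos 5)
Your necessity argument is correct and is in fact a genuinely different (and arguably cleaner) route than the paper's. The paper argues that a one-dimensional component of $L_0$ would separate the plane and therefore, since $\Omega$ is simply connected and dense, would meet $\Omega$, contradicting local injectivity of $\Phi$. You instead propagate the identity $u^2+v^2=2H$ over $\Omega$ by analytic continuation, deduce $H\ge 0$ on $\overline{\Omega}=\mathbb{R}^2$, and conclude $L_0\subseteq\{\nabla H=0\}$, a finite set by the standing hypothesis on $H$. That is sound, and it uses only connectedness of $\Omega$ plus density, not simple connectedness.

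The sufficiency direction, however, contains a genuine gap. From $H\ge 0$ you infer that every singularity of $H$ is a local minimum, hence a center, hence that there are no saddles and no separatrices. This implication is false: nonnegativity of $H$ forces singular points of $L_0$ to be minima, but says nothing about singularities lying on positive level sets. For instance $H=\tfrac12 y^2+\tfrac18 x^2(x-2)^2=\tfrac12(x^2+y^2)+h.o.t.$ is nonnegative on all of $\mathbb{R}^2$, has only isolated singularities, has $L_0=\{(0,0),(2,0)\}$ consisting of isolated points, and yet has a saddle at $(1,0)$ where $H=\tfrac18$; its separatrices bound the period annulus of the origin. Since your plan for gluing the angle function $\theta$ beyond the period annulus of $O$, and for choosing the cut locus, is explicitly predicated on the absence of saddles and separatrix connections, the construction is unsupported exactly where the theorem is hard. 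The paper's proof confronts this directly: it continues $\Phi$ trajectory by trajectory past the remaining singularities $\Lambda_0$ (which need not be centers), taking $\Omega=\mathbb{R}^2-\mathcal{P}$ for a polyline $\mathcal{P}$ with vertex set $\Lambda_0$ running to infinity, and the crucial ingredient is Lemma \ref{l-inter}, which guarantees that the limit set of $\Phi$ along a transversal section meets the circle $S_{h_0}$ in a single point, so that the extension across the boundary of each previously constructed region is single-valued, continuous, and then analytic by the uniqueness theorem for the auxiliary ODEs (\ref{UV1})--(\ref{UV2}). Your sketch acknowledges this as ``the main obstacle'' but supplies no substitute for that lemma, so the sufficiency half remains unproved.
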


When the origin is the unique
center of system (\ref{H-R}),
we have a further result as follows.
\begin{theorem}\label{TM-g}
For system (\ref{H-R}),
if  $L_0 $ consists of only isolated points,
and the origin is the unique center which is isochronous,
then there exists a canonical linearizing transformation analytic
on the whole plane.
\end{theorem}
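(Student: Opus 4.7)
The plan is to bootstrap Theorem \ref{TM-c-g} and use the uniqueness-of-center hypothesis to extend $\Phi$ to all of $\mathbb{R}^2$. Since $L_0$ consists of isolated points, Theorem \ref{TM-c-g} provides a canonical linearizing transformation $\Phi=(u,v)$ analytic on a simply-connected open $\Omega$ with $\overline{\Omega}=\mathbb{R}^2$. Let $F:=\mathbb{R}^2\setminus\Omega$; the goal is to show that $F$ is empty, or at worst a closed nowhere-dense set across which $\Phi$ admits analytic continuation.

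Two observations drive the argument. First, identity (\ref{uv-H}) gives $u^2+v^2=2H(x,y)$, and since $H$ is a polynomial, $\Phi$ is locally bounded on all of $\mathbb{R}^2$. Second, because $\Phi$ conjugates the flow of (\ref{H-R}) to the rotational flow (\ref{H-R-L}) and its Jacobian is forced to equal $1$ by the canonical condition, every orbit of (\ref{H-R}) lying in $\Omega$ is periodic of period $2\pi$ and $\Phi$ is a local diffeomorphism on $\Omega$. Using this, I plan to show that $F$ can contain no point at which the dynamics differ from the pure rotation. A non-center singularity $p\neq O$ cannot be accommodated, since any continuous extension of $\Phi$ across $p$ would, by the Jacobian-$1$ constraint, be a local diffeomorphism conjugating a hyperbolic or degenerate linearization to the rotational one, which is impossible. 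A second center $p'\neq O$ is ruled out by hypothesis, and is in any case incompatible with $\Phi$ extending as a local diffeomorphism, because the image of a small closed orbit encircling $p'$ with $H\equiv h\approx H(p')$ would have to be the full circle $S_h$ of radius $\sqrt{2h}>0$ instead of a small loop around $\Phi(p')$. Consequently $F$ can contain no singularity of (\ref{H-R}) and no separatrix arc.

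Finally, with $F$ reduced to a closed nowhere-dense set carrying no dynamical obstruction, I would use the boundedness of $\Phi$ from (\ref{uv-H}) together with a Riemann-type removable-singularity theorem to extend $\Phi$ analytically across $F$. The identity theorem then guarantees that the extension still satisfies $u^2+v^2=2H$ and has Jacobian $1$, so it remains a canonical linearizing transformation, now defined on the whole plane. The main obstacle is making the obstruction argument of the second paragraph rigorous for arbitrary degenerate critical points and for limiting orbits accumulating in $F$; this will require a careful local study combining the canonical constraint with the way each level curve $L_h$ in a neighbourhood of a putative boundary point is forced to map onto the smooth circle $S_h$.
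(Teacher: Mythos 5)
There is a genuine gap at the heart of your argument: you never actually prove that system (\ref{H-R}) has no singularities other than the origin, and this is the entire content of the theorem once Theorem \ref{TM-c-g} is in hand. Your claim that ``a non-center singularity $p\neq O$ cannot be accommodated'' is circular: what you show is that \emph{if} $\Phi$ extended continuously across such a $p$, it would conjugate a saddle-like or degenerate local phase portrait to a rotation, which is impossible. But $p\in F$ means precisely that $\Phi$ is not defined at $p$, so there is no extension to contradict; all you may conclude is that such a $p$ would be a genuine obstruction to extending $\Phi$ --- exactly the situation you must rule out, not a contradiction. The paper closes this gap with the commuting system (\ref{C-H-R}) of Property \ref{property}: if $p_0\in\Lambda_0$, it is not a center (by hypothesis) and not a node (the system is Hamiltonian), so a separatrix enters $p_0$; taking a trajectory $\eta$ of (\ref{C-H-R}) tending to $p_0$ and pushing it with the flow $\varphi^{t}$ of (\ref{H-R}), commutativity forces every image $\varphi^{t}(\eta)$ to be a (\ref{C-H-R})-trajectory that also tends to $p_0$, so $p_0$ is of node type for (\ref{C-H-R}) and hence a center of (\ref{H-R}), contradicting uniqueness of the center. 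Your proposal contains no substitute for this step.

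Your final step is also unsound as stated: there is no ``Riemann-type removable-singularity theorem'' for real-analytic maps. Boundedness of $\Phi$ near a closed nowhere-dense set (here $F$ is one-dimensional --- in the paper's construction it is the polyline $\mathcal{P}$ running from a singularity to infinity) does not permit analytic continuation across it; compare $|x|$, which is bounded and analytic off $\{x=0\}$ yet admits no analytic continuation. In the paper no such continuation is needed: once $\Lambda_0=\emptyset$ is established, the boundary $\mathcal{P}$ of $\Omega$ is empty by construction and $\Omega=\mathbb{R}^2$ outright. Two smaller issues: the assertion that every orbit contained in $\Omega$ is $2\pi$-periodic requires injectivity of $\Phi$, which has not been established; and the second-center discussion is moot, since the hypothesis already excludes a second center.
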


To prove the above theorems,
the key tool is highly based on the existence of a special system commuting with system (\ref{H-R}).
So we first introduce some properties about it.

\section{ Commuting systems}

\begin{definition}
We call that two differential systems on $\mathbb{R}^2$
\begin{eqnarray*}
\left(
  \begin{array}{c}
    \dot{x} \\
    \dot{y} \\
  \end{array}
\right)
=\left(
      \begin{array}{l}
        F_{i}(x,y)\\
        G_{i}(x,y)\\
      \end{array}
    \right),\ i=1,2,
\end{eqnarray*}
commute with each other,
if the corresponding vector fields
$$X_1=(F_{1}(x,y),G_{1}(x,y)),\ \ X_2=(F_{2}(x,y),G_{2}(x,y))$$ commute with each other, that is, their Lie bracket
$$[X_1, X_2]=
\left(
               \begin{array}{cc}
                 \frac{\partial F_2}{\partial x} & \frac{\partial F_2}{\partial y} \\
                 \frac{\partial G_2}{\partial x} & \frac{\partial G_2}{\partial y} \\
               \end{array}
             \right)
\left(
               \begin{array}{c}
                 F_1 \\
                 G_1 \\
               \end{array}
             \right)
-\left(
               \begin{array}{cc}
                 \frac{\partial F_1}{\partial x} & \frac{\partial F_1}{\partial y} \\
                 \frac{\partial G_1}{\partial x} & \frac{\partial G_1}{\partial y} \\
               \end{array}
             \right)
\left(
               \begin{array}{c}
                 F_2 \\
                 G_2 \\
               \end{array}
             \right)
=0.$$
\end{definition}
Taking advantage of the Jacobian matrix
\begin{align*}
J= \frac{\partial (u,v)}{\partial (x,y)} =\left(
  \begin{array}{cc}
    u_x & u_y\\
    v_x & v_y\\
  \end{array}
\right)
\end{align*}
of canonical transformation $\Phi$, we can construct a new system
\begin{align}\label{C-H-R}
\left(
  \begin{array}{c}
   \frac{{\rm d} x}{{\rm d} s} \\
    \frac{{\rm d} y}{{\rm d} s} \\
  \end{array}
\right)
=(J^{\top} J)^{-1}\left(
      \begin{array}{c}
        H_x \\
        H_y  \\
      \end{array}
    \right),
\end{align}
possessing the following  good properties
which can be achieved easily by the directly computation and
the following equations from relation (\ref{uv-H})(or, see \cite{C-S}):
\begin{align*}
J^{\top}
\left(
  \begin{array}{c}
   u \\
   v
  \end{array}
\right)
=
\left(
  \begin{array}{cc}
   u_x & v_x \\
   u_y & v_y
  \end{array}
\right)
\left(
  \begin{array}{c}
   u \\
   v
  \end{array}
\right)
=\left(
      \begin{array}{c}
        H_x \\
        H_y
      \end{array}
    \right),
\end{align*}
and here we use $s$ to represent its time variable
in order to distinguish from the time variable $t$ of system (\ref{H-R}).

\begin{property}\label{property}
Assuming that the canonical linearizing transformation $\Phi$
can be well defined on an open set $E$ containing the origin,
then on $E$ we have

\begin{enumerate}
\item     system (\ref{H-R})  and system (\ref{C-H-R}) can be linearized simultaneously by  $\Phi$, that is,  system (\ref{C-H-R}) can also be transformed by $\Phi$ to a linear system
    \begin{align*}
\left(
  \begin{array}{c}
    \frac{{\rm d} u}{{\rm d} s} \\
    \frac{{\rm d} v}{{\rm d} s} \\
  \end{array}
\right)
=\left(
      \begin{array}{r}
        u\\
        v \\
      \end{array}
    \right);
\end{align*}
\item   system (\ref{C-H-R}) commutes with system (\ref{H-R});
\item   system (\ref{C-H-R}) and system (\ref{H-R}) have the same set of singularities;
\item   the two vector fields induced by system (\ref{C-H-R}) and system (\ref{H-R}) respectively are transversal at any non-singular point.
 \end{enumerate}

\end{property}

\section{Proof of theorems}

\begin{proof}[Proof of Theorem \ref{TM-c-g}]
	
We first prove the necessity.
Let $\Omega$ be a simply-connected open set with $\overline{\Omega}=\mathbb{R}^2$
where the canonical linearizing transformation $\Phi$ can be analytically defined well.
Denote by $N$ the set of non-isolated points on $L_0$.
Suppose $N\neq \emptyset$.
Noticing that $L_0$ is algebraic,
the real plane is divided into at least two disjoint open sets by $N$,
while $\Omega$ is simply-connected,
so $N\nsubseteq \partial\Omega$, that is $N\bigcap \Omega \neq \emptyset$.
Then all points of $N\bigcap \Omega $ are mapped  to the origin of the $(u,v)$-plane by $\Phi$,
which is ridiculous because $\Phi$ is a local homeomorphism on $\Omega$.

For the sufficiency,
denoting by $\Lambda_0$ the set of finite singularities  of  system (\ref{H-R}) except the origin,
we take a polyline $\mathcal{P}$ starting from a  point in $\Lambda_0$ ending at infinity,
 such that the vertex set is $\Lambda_0$ and there has no loop in $\mathcal{P}$.
 If $\Lambda_0=\emptyset$, then $\mathcal{P}=\emptyset$.
We will show that,  $\Phi$ can be defined well on $\mathbb{R}^2-\mathcal{P}$.

We start from a small simply-connected open set  $\Omega_0$
bounded by a period orbit near the origin,
where the canonical transformation $\Phi$ is well defined.
This $\Omega_0$ exists due to that $\Phi$ can be well defined  locally.
Assume $\partial\Omega_0 \subseteq L_{h_0}$ for some a $h_0\neq0$.

For any a non-singular point $p\in L_{h_0}$ of system (\ref{H-R}),
there exists a local invertible change of variables $\tau_p:\ (h,t)\mapsto (x,y)$
defined on a sufficiently small disk $D_p$ centered at $p$,
where $t$ represents  the time variable of system (\ref{H-R}),
such that $\tau_p(h_0,0)=p $ and
$\tau_p(h,0)=\gamma_p $,
where $\gamma_p\subseteq D_p $  is a line segment passing through $p$ transversal to vector field (\ref{H-R})
at $p$, e.g, we can choose $\gamma_p $ tangent  to the  gradient field $(H_x,H_y)$ at $p$.
Clearly $\tau_p$ is analytic locally,
because it is the solution of system (\ref{H-R}) containing a parameter $h$ with the initial value condition
$ (x,y)=(x(h,0),y(h,0))$ at $t=0$,
 which is analytic in $h$ for the reason that it is the restriction of $(x,y)$ on $\gamma_p$.

To avoid too many notations,
we still denote  by $u(h,t)$ and $v(h,t)$ respectively $u(x(h,t),y(h,t))$ and $v(x(h,t),y(h,t))$.

Given a point $p\in \partial\Omega_0 $,
let $A_p$ be the limit set of $\Phi(\gamma_p\bigcap \Omega_0)$,
then we have the following

\begin{lemma}\label{l-inter}
$A_p \bigcap S_{h_0}$ consists of only one point.
\end{lemma}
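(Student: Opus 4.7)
The plan is to construct an analytic extension of $\Phi$ to a neighbourhood of $p$, from which the uniqueness of the limit on $S_{h_0}$ follows at once. The key tool is the commuting system (\ref{C-H-R}): by Property \ref{property}(1), $\Phi$ conjugates (\ref{C-H-R}) to the linear system whose time-$s$ flow is the radial dilation $(u,v)\mapsto e^{s}(u,v)$, so denoting by $\psi_s$ the flow of (\ref{C-H-R}), one has
\begin{equation*}
\Phi\circ\psi_s \;=\; e^{s}\,\Phi \qquad \text{on } \Omega_0,
\end{equation*}
wherever both sides are defined.

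I would next analyze the local picture at $p$. Since $p\in\partial\Omega_0$ is non-singular for (\ref{H-R}), by Property \ref{property}(3) it is also non-singular for (\ref{C-H-R}), so $\psi_s$ is analytic on some neighbourhood $U$ of $p$ for $|s|$ small. By Property \ref{property}(4), the commuting vector field at $p$ is transversal to the Hamiltonian one, which is tangent to $L_{h_0}\supseteq\partial\Omega_0$; hence the orbit of (\ref{C-H-R}) through $p$ crosses $L_{h_0}$ transversally. Because the linearized commuting flow is a radial dilation from the origin of the $(u,v)$-plane, taking $s_0<0$ small carries $p$ into $\Omega_0$; after shrinking $U$, one may arrange $\psi_{s_0}(U)\subset\Omega_0$.

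The main step is to set
\begin{equation*}
\widetilde\Phi(q) \;:=\; e^{-s_0}\,\Phi\bigl(\psi_{s_0}(q)\bigr), \qquad q\in U.
\end{equation*}
Being a composition of analytic maps, $\widetilde\Phi$ is analytic on $U$, and the intertwining relation yields $\widetilde\Phi=\Phi$ on $U\cap\Omega_0$. Thus $\widetilde\Phi$ is an analytic extension of $\Phi$ across $p$. Consequently $\Phi(q)\to\widetilde\Phi(p)$ as $q\to p$ along $\gamma_p\cap\Omega_0$, a single point; passing (\ref{uv-H}) to the limit gives $\widetilde\Phi(p)\in S_{h_0}$, so $A_p\cap S_{h_0}=\{\widetilde\Phi(p)\}$, completing the proof.

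The main obstacle I anticipate is justifying that the commuting flow $\psi_s$ is genuinely defined on a two-sided neighbourhood of $p$, since the coefficient $(J^{\top}J)^{-1}\nabla H$ is a priori built from the Jacobian of $\Phi$ and hence originally lives only on $\Omega_0$. Overcoming this requires showing that this coefficient field admits an analytic extension across $p$, which should follow from the polynomiality of $H$ and the identity $\det J\equiv 1$, combined with the non-singularity of $p$ granted by Property \ref{property}(3).
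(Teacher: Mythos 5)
Your strategy has a genuine gap, and it is exactly the one you flag at the end but do not resolve: the commuting vector field of system (\ref{C-H-R}) is $(J^{\top}J)^{-1}\nabla H=J^{-1}(u,v)^{\top}$, so it is defined only where $\Phi$ (in fact its Jacobian $J$) is defined, i.e.\ on $\Omega_0$. Your extension $\widetilde\Phi(q)=e^{-s_0}\Phi(\psi_{s_0}(q))$ must be evaluated at points $q\in U$ lying \emph{outside} $\overline{\Omega_0}$, and for such $q$ the flow $\psi_{s_0}(q)$ simply does not exist yet; even $\psi_{s}(p)$ requires the vector field at $p$ itself. Knowing that $H$ is polynomial and that $\det J\equiv 1$ does not determine $J^{\top}J$ near $p$ (a symmetric unimodular matrix field is far from unique), so there is no way to ``analytically extend the coefficient field across $p$'' without first extending $\Phi$ --- which is precisely what the lemma is needed for. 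The argument is therefore circular; note also that Property \ref{property} is only asserted on an open set $E$ where $\Phi$ is already well defined.

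The paper avoids this trap by never flowing anything outside $\Omega_0$: it argues by contradiction entirely on the \emph{target} side. If $A_p\cap S_{h_0}$ contained two points $q_1,q_2$, it would contain a whole arc of $S_{h_0}$ between them; choosing an interior point $q'$ of that arc and solving the explicit linear ODEs (\ref{UV1}) in $t$ and (\ref{UV2}) in $h$ --- which live in the $(u,v)$-plane and require no knowledge of $\Phi$ beyond the local coordinates $(h,t)$ furnished by $\tau_p$ --- produces a second analytic linearization $\Phi'$ near $p$. Since $q'$ is also a limit point of $\Phi(\gamma_p\cap\Omega_0)$, the sets $\Phi(\gamma_p)$ and $\Phi'(\gamma_p)$ must intersect, and at a common point one obtains two distinct solutions of (\ref{UV2}) with the same initial data, contradicting uniqueness for initial value problems. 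If you want to retain the spirit of your construction, you must likewise build the candidate extension on the $(u,v)$-side starting from the limit point and then match it with $\Phi$ on the overlap by ODE uniqueness --- but that presupposes the limit point is unique, i.e.\ the statement of the lemma.
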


The proof of this lemma will be given after the proof of
Theorem  \ref{TM-c-g}. Notice that if one sets
$\Phi(p)= A_p \bigcap S_{h_0}$,
then one can see that Lemma \ref{l-inter} provides a way to  extend the definition
of $\Phi$ to the point $p$ continuously along
$\gamma_p\bigcap \Omega_0$.
In fact,   one can further
extend the definition of $\Phi$ to a sufficiently
small neighborhood of  $p$ analytically in the following way.

Let $(U,V)=(f(h_0,t),g(h_0,t))$  be the solution of ordinary differential equations(ODEs for short)
\begin{align}\label{UV1}
\left(
  \begin{array}{l}
    \frac{{\rm d}  U}{{\rm d} t} \\
    \frac{{\rm d}  V}{{\rm d} t}
  \end{array}
\right)
=\left(
  \begin{array}{r}
    -V \\
     U
  \end{array}
\right)
\end{align}
on functions $U(h,t)$ and $V(h,t)$ containing a parameter $h$ with the initial value condition
$$ (f(h_0,0),g(h_0,0))= (u_{p}, v_{p})$$
when $h=h_0$, here $ (u_{p}, v_{p})$ is the coordinate of point $A_p \bigcap S_{h_0}$.
Obviously $f(h_0,t)$ and $g(h_0,t)$ both analytically depend on $t$.
Then we can extend the definition of $\Phi$ to $\partial\Omega_0$ near $p$
by
\begin{equation}\label{Phi-c-e}
\Phi(\varphi^{t}(p))\triangleq (f(h_0,t),g(h_0,t)),
\end{equation}
for sufficiently small $|t|$,
where  $\varphi ^{t}(\cdot)$
represents the flow map associated to system (\ref{H-R}),
i.e. for any point $a \in \Omega_0$,
$\varphi^{t}(a)$ takes the value at $t$  of
the solution of  system (\ref{H-R}) with initial value $a$ when $t=0$.
Clearly this extension is continuous.

Next denote by $(\widetilde{u}(h,t), \widetilde{v}(h,t))$
 the solution of ODEs
\begin{align}\label{UV2}
\left(
  \begin{array}{l}
    \frac{{\rm d}  U}{{\rm d} h}
\\
    \frac{{\rm d}  V}{{\rm d} h}
  \end{array}
\right)
=
\left(
  \begin{array}{r}
   \frac{U}{2h} \\
\frac{V}{2h}
  \end{array}
\right)
\end{align}
containing a parameter $t$ with the initial condition
$$(\widetilde{u}(h_0,t),\widetilde{v}(h_0,t))= (f(h_0,t),g(h_0,t)),$$
where ODEs (\ref{UV2}) comes from the following equations
\begin{align*}
\left(
  \begin{array}{l}
    \frac{{\rm d}  U}{{\rm d} s}
\\
    \frac{{\rm d}  V}{{\rm d} s}
  \end{array}
\right)
=
\left(
  \begin{array}{l}
    U \\
 V
  \end{array}
\right)
\end{align*}
by the change of variable $s\mapsto h$ determined by the relation
$$\frac{U^2+V^2}{2}=\frac{C_1 e^{2s}+C_2 e^{2s}}{2}=h$$ with two constants
 $C_1$ and $C_2$ near $h_0$.
According to the analytic dependence on initial values and parameters of the solution for ODEs,
$\widetilde{u}(h,t)$ and $ \widetilde{v}(h,t)$ both analytically depend on $h$ and $t$
in a sufficiently small neighbourhood $D'_p\subseteq D_p$ of $p$.
Then  we can define a new analytic transformation $\widetilde{\Phi}(x,y)$ by
$$\widetilde{\Phi}(x,y)\triangleq(\widetilde{u}(h(x,y),t(x,y)), \widetilde{v}(h(x,y),t(x,y)),
\ \forall (x,y)\in D'_p.$$

According to Property (P1) and equation (\ref{Phi-c-e}),
$(u(h,t),v(h,t))$ is also a solution of ODEs (\ref{UV2}) on $\Omega$
and coincides with $(\widetilde{u}(h,t), \widetilde{v}(h,t))$ on $\partial\Omega_0$.
By the uniqueness of the solution for the initial value problem of ODEs,
$\widetilde{\Phi}$ is equal to $\Phi$ on $D'_p\bigcap\Omega_0$,
i.e., $\Phi$ can be extended to $\Omega_0 \bigcup D'_p$ analytically.

Now we assume that $\Phi$ has been well defined analytically  on $\Omega'_0=\Omega_0 \bigcup D'_p$.
Let $\Omega_1=\bigcup_{q\in \Omega'_0 } \xi_q$,
where
$\xi_q$ is the trajectory of system (\ref{H-R}) passing through $q\in \Omega'_0 $
such that it is  taken the whole continuous component of the trajectory if $\xi_q\bigcap \mathcal{P}=\emptyset$,
 else a  continuous part from $q$ to the intersection of $\xi_q$ and $\mathcal{P}$(excluding this point).
 Then $\Omega_1$ is still open and simply-connected.
 One can define the transformation $\Phi $ on $\Omega_1$ as follows:

 \begin{eqnarray*}
 \Phi(\varphi^{t}(q))\triangleq\varphi^{t}_{\ast}(\Phi(q)),
\ \forall q\in \Omega'_0,
\end{eqnarray*}
for any $t$ such that $\varphi^{t}(q)\in \Omega_1$,
where $\varphi^{t}_{\ast}(\cdot)$
represents the flow map associated to system (\ref{H-R-L}),
i.e. for any point $b\in \Phi(\Omega'_0)$,
$\varphi^{t}_{\ast}(b)$ takes the value at $t$  of
the solution of  system (\ref{H-R-L}) with initial value $b$ when $t=0$.
It is not difficult to see that this $\Phi $ is  canonical on $\Omega_1$,
because its components $u(h,t)$ and $v(h,t)$ satisfy the ODEs (\ref{UV1}) and (\ref{UV2}).

Note that $\partial\Omega_1$ consists of only the trajectories of system (\ref{H-R}) and points in $\mathcal{P}$.
If $\partial\Omega_1 \not\subseteq \mathcal{P}$,
then one can repeat the above steps at a point  $p\in \partial\Omega_1$ but $p\not\in \mathcal{P}$.
Finally $\Phi$ can be extended to a simply-connected open set $\Omega$ with $\partial\Omega = \mathcal{P}$.
Obviously $\overline\Omega=\mathbb{R}^2$, so the theorem is proved.
\end{proof}

\begin{proof}[Proof of Lemma \ref{l-inter}]
Suppose otherwise, i.e.  suppose there are at least two points $q_1$ and $q_2$ in $A_p\bigcap S_{h_0}$,
then there exists  a continuous arc $\sigma$ of $S_{h_0}$ from $q_1$ to $q_2$  in $A_p\bigcap S_{h_0}$,
for the reasons that $h$ monotonically depends on $s$
and $\Phi$ preserves the orientation of two given vector fields.

For any point $q'\in \sigma,\ q'\not = q_1,q_2$, with coordinate $(u_{q'},v_{q'})$,
denote by $(f'(h_0,t),g'(h_0,t))$  the solution of ODEs (\ref{UV1}) with initial value
$$(f'(h_0,0),g'(h_0,0))=(u_{q'},v_{q'}),$$
then by the solution $(u'(h,t),v'(h,t))$ of ODEs (\ref{UV2}) with initial value
$$(u'(h_0,t),v'(h_0,t))=(f'(h_0,t),g'(h_0,t)),$$
one can obtain another analytic transformation $\Phi'$
defined in a sufficiently small neighbourhood $D_2 \subset D_p$ of $p$ by
$$\Phi':\ (x,y)\mapsto (u'(h(x,y),t(x,y)),v'(h(x,y),t(x,y))).$$

Clearly $\Phi'$ linearizes system (\ref{C-H-R}),
i.e., changes system (\ref{C-H-R}) to a linear system
\begin{align}\label{u'v'-L-N}
\left(
  \begin{array}{l}
    \frac{{\rm d}  u'}{{\rm d} s}
\\
    \frac{{\rm d}  v'}{{\rm d} s}
  \end{array}
\right)
=
\left(
  \begin{array}{r}
   u' \\
    v'
  \end{array}
\right).
\end{align}
Besides, we will show that it also linearizes system (\ref{H-R}).
Assume system (\ref{H-R})  is transformed to the following
\begin{align}\label{u'v'}
\left(
  \begin{array}{l}
    \frac{{\rm d}  u'}{{\rm d} t}\\
    \frac{{\rm d}  v'}{{\rm d} t}
  \end{array}
\right)
=
\left(
  \begin{array}{l}
    F(u',v') \\
   G(u',v')
  \end{array}
\right),
\end{align}
then we assert that $(F,G)=(-v',u')$ for the following reasons.
Note that system (\ref{u'v'}) and the linear system
\begin{align}\label{u'v'-L}
\left(
  \begin{array}{l}
    \frac{{\rm d}  u'}{{\rm d} t}
\\
    \frac{{\rm d}  v'}{{\rm d} t}
  \end{array}
\right)
=
\left(
  \begin{array}{r}
   -v' \\
    u'
  \end{array}
\right),
\end{align}
 both commute with system (\ref{u'v'-L-N}),
and  have a common trajectory $u'^2+v'^2=h_0$,
which results in that all of their  trajectories are the same,
i.e. they are equivalent orbitally.
So there exists a continuous function $\lambda(u',v')$ such that
 $(F,G)=(-\lambda v',\lambda u').$
By the commutativity of system (\ref{u'v'}) and  system (\ref{u'v'-L-N}),
$\lambda$ satisfies $\partial\lambda /\partial u'=\partial\lambda /\partial v'=0$ and the initial condition $\lambda=1$ when $u'^{2}+v'^{2}=h_0$,
thus $\lambda\equiv 1,$
i.e., the assertion has been proved.

Due to that $\Phi'(\gamma_p)$ is continuous at $p$ and
$q'$ is a limit point but not  $q_1,q_2$,
we have $\Phi(\gamma_p)\bigcap\Phi'(\gamma_p)\neq\emptyset$.
For a point $q_3\in \Phi(\gamma_p)\bigcap\Phi'(\gamma_p)$ with coordinate $(u_3,v_3)$,
let $(f_{3}(h_3,t),g_{3}(h_3,t))$ be the solution of ODEs (\ref{H-R-L})
with initial value $(f_{3}(h_3,0),g_{3}(h_3,0))=(u_3,v_3)$,
where $(h_3,0)$ is the coordinate of $\Phi^{-1}(q_3)\bigcap D_2=\Phi'^{-1}(q_3)\bigcap D_2$,
consequently we have obtained two different solutions $(u,v)$ and $(u',v')$ for ODEs (\ref{UV2})
with the same initial value $(f_{3}(h_3,t),g_{3}(h_3,t))$ when $h=h_3$,
this contradicts to the uniqueness of the solution of ODEs with the initial value condition.
\end{proof}

\begin{remark}
Note that the proof of the necessity of Theorem \ref{TM-c-g} is also valid for the more general $H(x,y)$
which can have non-isolated singularities.
Obviously  if $H(x,y)$ is a polynomial of odd degree,
then there must exist non-isolated points on $L_0 $.
So we have the following corollary which is equivalent to Theorem 1 of \cite{L-R}.
\begin{corollary}\label{Odd}
For polynomial Hamiltonian system (\ref{H-R})  with odd $n$,
if the origin is isochronous,
then the corresponding canonical linearizing transformation can not be defined well on the whole plane.
\end{corollary}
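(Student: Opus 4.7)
The plan is to derive a contradiction by combining the necessity direction of Theorem \ref{TM-c-g}, which as noted in the preceding Remark applies in this generality, with a sign argument on the top-degree homogeneous part of $H$. The result really has two separate ingredients, and the bulk of the proof consists in checking the geometric ingredient about $L_0$.

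First, I would argue by contradiction: suppose a canonical linearizing transformation $\Phi$ is defined analytically on all of $\mathbb{R}^2$. Then taking $\Omega=\mathbb{R}^2$, which is a simply-connected open set with $\overline{\Omega}=\mathbb{R}^2$, the necessity direction of Theorem \ref{TM-c-g} (whose proof, as the Remark emphasizes, uses only that $\Phi$ is a local homeomorphism and that any non-isolated point of the algebraic set $L_0$ separates the plane into at least two open components) forces $L_0=\{H=0\}$ to consist only of isolated points.

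Second, I would show that when $n$ is odd, $L_0$ cannot consist of only isolated points. Let $H_n$ denote the homogeneous top-degree part of $H$. Because $n$ is odd, $H_n(-x,-y)=-H_n(x,y)$, so $H_n$ takes both strictly positive and strictly negative values on the unit circle. Along one ray $H$ therefore tends to $+\infty$ and along another to $-\infty$, and by the intermediate value theorem applied on each circle of sufficiently large radius $R$ there is at least one point of $L_0$ at distance $R$ from the origin. Hence $L_0$ is unbounded. Since $L_0$ is a real algebraic subset of $\mathbb{R}^2$, it has only finitely many irreducible components, each either a single point or of dimension one; if every point of $L_0$ were isolated then $L_0$ would reduce to finitely many points and hence be bounded, a contradiction. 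Consequently $L_0$ contains non-isolated points, which contradicts the conclusion obtained from Theorem \ref{TM-c-g}.

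The main delicate point is the second step, namely turning the sign change of $H_n$ on $S^{1}$ into the existence of a genuinely unbounded piece of $L_0$, together with the observation that an unbounded real algebraic subset of $\mathbb{R}^2$ cannot consist only of isolated points. Everything else is a direct quotation of the necessity half of Theorem \ref{TM-c-g} specialised to $\Omega=\mathbb{R}^2$.
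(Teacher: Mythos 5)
Your proposal is correct and follows essentially the same route as the paper: the Remark preceding the corollary applies the necessity half of Theorem \ref{TM-c-g} (valid for $\Omega=\mathbb{R}^2$) and simply declares it ``obvious'' that odd degree forces non-isolated points on $L_0$. Your second step merely fills in that detail, and the sign-change/unboundedness argument you give (odd top part $H_n$ changes sign on $S^1$, so $L_0$ meets every large circle, hence is unbounded and cannot be a finite set of isolated points) is a correct justification of the step the paper leaves implicit.
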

\end{remark}

\begin{proof}[Proof of Theorem \ref{TM-g}]
By Theorem \ref{TM-c-g},
we can assume  the canonical linearizing transformation $\Phi$ is well defined
in a simply-connected open set $\Omega$ with $\partial\Omega=\mathcal{P}$.
The aim of the theorem is to show $\Lambda_0=\emptyset$.
Suppose otherwise.
Let $p_0\in \Lambda_0$ is the beginning point of $\mathcal{P}$,
then $p_0$ is neither a center by the assumption of the theorem,
 nor a node due to that system (\ref{H-R}) is Hamiltonian,
therefore in $\Omega$ there exists at least one separatrix $\xi_0$ passing through $p_0$.
Clearly by the proof of Theorem \ref{TM-c-g},
the boundary $\mathcal{P}$ can be replaced by a piecewise smooth curve with the same vertex set $\Lambda_0$.
So, without loss of generality,
we can assume $\xi_0\subseteq \mathcal{P}$  in a sufficiently small neighbourhood $D_0$ of $p_0$.
Then there exists another trajectory $\xi_1$ of system (\ref{H-R}) tending to $p_0$
when $t\rightarrow +\infty$ or $t\rightarrow -\infty$.
This implies that there exists a trajectory $\eta$ of system (\ref{C-H-R})  tending to $p$
 in $D_0 - \xi_0$ by Property \ref{property}.

By the commutativity of system (\ref{H-R}) and system (\ref{C-H-R}),
for any time $t$, $\varphi^{t}(\eta)$ is still a trajectory of system (\ref{C-H-R}).
Due to that $p_0$ is singular of system (\ref{H-R}),
for any point $b\in \eta$ sufficiently closed to $p_0$,
the length of the trajectory of system (\ref{H-R}) from $b$ to $\varphi^{t}(b)$
vanishes when $b\rightarrow p_0$,
i.e., $\varphi^{t}(\eta)$ also tends to $p_0$.
This implies $p_0$ is a singularity of node type for system (\ref{C-H-R}),
consequently it is a center of system (\ref{H-R}),
which leads a contradiction.
\end{proof}

\section*{Acknowledgements}
This work is supported by NSFC 11701217 of China,  NSF 2017A030310181 of Guangdong(China),
and  Research Fund 21616312 of Jinan University(China).

\end{document}